\newtheorem{thm}{Theorem}[section]
\newtheorem{lem}[thm]{Lemma}
\newtheorem{claim}[thm]{Claim}
\newtheorem{cor}[thm]{Corollary}
\theoremstyle{definition}
\newtheorem{rmk}[thm]{Remark}
\numberwithin{equation}{section}
\newcommand\be{\begin{equation}}
\newcommand\ba{\begin{eqnarray}}
\newcommand\ee{\end{equation}}
\newcommand\ea{\end{eqnarray}}
\newtheoremstyle{case}{}{}{}{}{}{:}{ }{}
\theoremstyle{case}
\def\C{{\mathbb C}}
\def\R{{\mathbb R}}
\def\Z{{\mathbb Z}}
\def\P{{\mathbb P}}
\def\N'{\lceil \max_{f \in S}\deg(f) (N' +1) \rceil}
\title[]
{Dynamical Cancellation of Polynomials }
\author{Xiao Zhong}
\thanks{The author was supported in part by NSERC grant RGPIN-2022-02951.}
\address{University of Waterloo \\
Department of Pure Mathematics \\
Waterloo, Ontario \\
Canada  N2L 3G1}
\email{x48zhong@uwaterloo.ca}
\begin{document}
\keywords{Dynamical cancellation, invariant curves, arithmetic dynamics, semigroups of maps, rational points on curves}
\subjclass[2020]{ 37P55, 14G05}
\maketitle
\begin{abstract}
    Extending the work of Bell, Matsuzawa and Satriano, we consider a finite set of polynomials $S$ over a number field $K$ and give a necessary and sufficient condition for the existence of a $N \in \mathbb{N}_{> 0}$ and a finite set $Z \subset \P^1_K \times \P^1_K$ such that for any $(a,b) \in (\P^1_K \times \P^1_K) \setminus Z$ we have the cancellation result:
   if $k>N$ and $\phi_1,\ldots ,\phi_k$ are maps in $S$ such that $\phi_{k} \circ \dots \circ \phi_1 (a) = \phi_k \circ \dots \circ \phi_1(b)$, then in fact
    $\phi_N \circ \dots \circ \phi_1(a) = \phi_N \circ \dots \circ \phi_1(b)$. Moreover, the conditions we give for this cancellation result to hold can be checked by a finite number of computations from the given set of polynomials.

\end{abstract}

\section{Introduction}
Let $X$ be a projective variety and let $f : X \to X$ be a surjective self-map, with both $X$ and $f$ defined over a number field $K$. An important part of understanding the dynamics of $(X,f)$ is to find the closed subvarieties $Y \subseteq X$ that are invariant under $f$; i.e., the varieties $Y$ with $f(Y) \subseteq Y$. This naturally leads one to look at the iterated preimages of $Y$ under $f$. A series of observations on the increasing geometric complexity of the iterated preimages and how, in principal, the geometric structure should exert significant influence on the underlying arithmetic structure gives rise to the expectation that the tower of $K$-points 
\[
Y(K) \subseteq (f^{-1}(Y))(K) \subseteq (f^{-2}(Y))(K) \subseteq \cdots
\]
should eventually stabilize. This expectation has been made precise as the Preimages Question in \cite[Question 8.4 (1)]{Preimages} and solved with an affirmative answer when $X$ is a smooth variety with nonnegative Kodaira dimension and $f$ is \'etale \cite[Theorem 1.2]{DynamicalCrevised}.

A special case of the Preimages Question is when $f$ is a coordinate-wise map $(g,g)$ on $X \times X$, where $g: X \to X$ is a surjective self-morphism. In this case, the Preimages Questions is translated to a cancellation problem; i.e. it says that there should exist a natural number $s$ such that for all $x,y \in X(K)$, if $g^n(x) = g^n(y)$ for some $n > s$ then we have $g^s(x) = g^s(y)$. The precise statement is given by \cite[Conjecture 1.5]{DynamicalCrevised} and in the same paper \cite[Theorem 1.3]{DynamicalCrevised} the case when $X$ is a curve is proved by applying $p$-adic uniformization techniques of Rivera-Letelier \cite{Uniformization}. Since significantly more is known about polynomial self-maps on $\mathbb{P}^1$ than surjective self-morphisms of projective varieties in general, the authors also prove a more general cancellation result allowing multiple polynomial self-maps on $\mathbb{P}^1$ as follows.

Henceforth we let $P_r(x)$ and $T_r(x)$ denote respectively the cyclic polynomial, $x^r$, of degree $r$ and the Chebyshev polynomial of degree $r$.
\begin{thm}\cite[Theorem 1.7]{DynamicalCrevised}
Let $K$ be a number field and let $\phi_1, 
\dots, \phi_r$ be polynomial maps on $\P^1_K$ of degree at least two. Suppose that none of the indecomposable factors of $(\phi_i)_{\overline{K}}$ are linearly related to a $T_d$ with $d$ odd or a $P_m$ for some $m$. Then there is a finite set $Z \subset (\P^1_K \times \P^1_K)(K)$ with the following property: for any point $a$, $b \in \P^1_K$ such that $(a,b) \notin Z$, if 
\[\phi_{i_n} \circ \dots \circ \phi_{i_1} (a) = \phi_{i_n} \circ \dots \circ \phi_{i_1} (b)\]
for some $n \geq 0$ and $i_1, \dots, i_n \in \{1, \dots, r\}$, then 
\[\phi_{i_2} \circ \phi_{i_1}(a) = \phi_{i_2} \circ \phi_{i_1}(b).\]
\end{thm}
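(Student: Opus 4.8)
The plan is to translate the cancellation statement into a finiteness statement for $K$-points on a fixed finite list of curves of genus $\geq 2$, and then to close the gap with a height argument. Fix a pair $(a,b)$ and a word $i_1,\dots,i_n$ with $\phi_{i_n}\circ\cdots\circ\phi_{i_1}(a)=\phi_{i_n}\circ\cdots\circ\phi_{i_1}(b)$, and set $a_0=a$, $b_0=b$, $a_j=\phi_{i_j}(a_{j-1})$, $b_j=\phi_{i_j}(b_{j-1})$. Let $j^\ast$ be the least index with $a_{j^\ast}=b_{j^\ast}$. Since the conclusion $\phi_{i_2}\circ\phi_{i_1}(a)=\phi_{i_2}\circ\phi_{i_1}(b)$ is exactly the assertion $j^\ast\leq 2$, we must rule out $j^\ast\geq 3$ for all but finitely many $(a,b)$. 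When $j^\ast\geq 3$, minimality gives $a_{j^\ast-1}\neq b_{j^\ast-1}$ while $\phi_{i_{j^\ast}}(a_{j^\ast-1})=\phi_{i_{j^\ast}}(b_{j^\ast-1})$. Writing $C_\psi\subset\P^1\times\P^1$ for the Zariski closure of $\{(P,Q):\psi(P)=\psi(Q),\ P\neq Q\}$, this says $(a_{j^\ast-1},b_{j^\ast-1})$ is a $K$-point of $C_{\phi_{i_{j^\ast}}}$; moreover $a_{j^\ast-1}=g(a)$ and $b_{j^\ast-1}=g(b)$ with $g=\phi_{i_{j^\ast-1}}\circ\cdots\circ\phi_{i_1}$ and $\deg g\geq 2^{\,j^\ast-1}\geq 4$. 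Thus every potential counterexample $(a,b)$ is carried by a composition of degree $\geq 4$ into one of the finitely many curves $C_{\phi_1},\dots,C_{\phi_r}$.

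The geometric core is the claim that, under the hypothesis, every irreducible component of every $C_{\phi_i}$ over $\overline K$ has geometric genus $\geq 2$. Decompose $\phi_i$ over $\overline K$ into indecomposable factors $\phi_i=\psi_{(m)}\circ\cdots\circ\psi_{(1)}$. Unwinding $\phi_i(P)=\phi_i(Q)$ one checks that $C_{\phi_i}$ is contained in the union over $j$ of the preimages of $C_{\psi_{(j)}}$ under the finite self-map $(\psi_{(j-1)}\circ\cdots\circ\psi_{(1)})\times(\psi_{(j-1)}\circ\cdots\circ\psi_{(1)})$ of $\P^1\times\P^1$; since a finite morphism contracts no curve, each component of such a preimage maps finitely onto a component of $C_{\psi_{(j)}}$, and a finite cover of a curve of genus $\geq 2$ again has genus $\geq 2$. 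So it suffices to treat an indecomposable $\psi_{(j)}$ that is not linearly related to any $P_m$ or $T_d$. Such a polynomial has degree $d\geq 4$ --- in degrees $2$ and $3$ the only indecomposables up to linear equivalence are $x^2$, $x^3$ and $T_3$ --- and, by Ritt's theory together with the classification of primitive permutation groups containing a full cycle, its monodromy group is $A_d$ or $S_d$. Since $A_d$ and $S_d$ are $2$-transitive, $C_{\psi_{(j)}}$ is irreducible; and a Riemann--Hurwitz computation from the ramification of $\psi_{(j)}$ shows that its genus, generically $(d-2)^2$, can be lowered only by nodes coming from coincident critical data, of which an indecomposable polynomial admits too few to reach genus $\leq 1$. (Alternatively, one may quote the known description of the components of $\{f(x)=f(y)\}$ and of their genera from the work of Fried, Bilu--Tichy and Pakovich.)

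With the genus bound in hand the proof finishes arithmetically. By Faltings' theorem each $C_{\phi_i}(K)$ is finite, so $F:=\bigcup_{i=1}^{r}C_{\phi_i}(K)$ is a finite set depending only on $K$ and the $\phi_i$, and $B:=\max_{(P,Q)\in F}\max\{h(P),h(Q)\}$ is finite. Using the functorial height inequality $h(g(x))\geq(\deg g)\,h(x)-c_g$ and the telescoping bound $C_0:=\sup_g c_g/\deg g<\infty$, valid uniformly over all compositions $g$ of the $\phi_i$, any counterexample $(a,b)$ satisfies $h(a)\leq h(g(a))/\deg g+C_0\leq B/4+C_0$ and likewise $h(b)\leq B/4+C_0$. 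By Northcott's theorem there are only finitely many $(a,b)\in(\P^1_K\times\P^1_K)(K)$ with both coordinates of height at most $B/4+C_0$; take $Z$ to be this finite set.

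The hard part is the genus bound of the second paragraph, and this is precisely where the exclusion of the $P_m$ and of the $T_d$ with $d$ odd is forced on us: these are exactly the indecomposable polynomials whose $C_\psi$ splits off components of genus $0$ --- unions of lines $\zeta P=Q$ for $P_m$, and the rotation curves coming from the identity $T_d(\cos\theta)=\cos d\theta$ --- components that can carry infinitely many $K$-points and persist under preimages, so that no finite $Z$ could exist. Making the genus bound uniform, rather than merely generic within each family, is the step that calls either for a careful analysis via Riemann--Hurwitz of the admissible branch configurations of an indecomposable polynomial, or for an appeal to the existing literature on the factorisation and geometry of $\{f(x)=f(y)\}$.
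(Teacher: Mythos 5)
The paper does not actually prove this statement---it is quoted from Bell--Matsuzawa--Satriano---so I am judging your argument on its own terms, and it has a genuine gap: the central claim of your second paragraph, that every geometric component of $C_{\phi_i}=\overline{\{(P,Q):\phi_i(P)=\phi_i(Q),\ P\neq Q\}}$ has genus $\geq 2$, is false. For $\deg f=d$ the curve $C_f$ is cut out by $(f(x)-f(y))/(x-y)$, a plane curve of degree $d-1$, so for $d=4$ its geometric genus is at most $1$ no matter what $f$ is. Take $f(x)=x^4+x$: it is indecomposable (no quadratic-in-quadratic decomposition exists since the $x^3$-coefficient vanishes only when the $x^1$-coefficient does), it is not linearly related to $P_4$ (its three finite critical points are simple, with distinct critical values), and degree $4$ rules out the odd Chebyshevs, so it satisfies the hypothesis of the theorem; yet $C_f=V(x^3+x^2y+xy^2+y^3+1)$ is a smooth irreducible plane cubic with the point $(1:-1:0)$ at infinity, i.e.\ an elliptic curve, which over a suitable $K$ has infinitely many $K$-points of unbounded height. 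Your ``generic genus $(d-2)^2$'' is also off (the arithmetic genus of a degree-$(d-1)$ plane curve is $(d-2)(d-3)/2$), and the assertion that an indecomposable non-cyclic, non-Chebyshev polynomial has monodromy $A_d$ or $S_d$ fails as well: M\"uller's classification produces exceptional primitive polynomial monodromy groups in degrees $7,8,9,11,13,15,21,31$.

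Once the genus claim fails, the endgame collapses: Faltings does not make $C_{\phi_i}(K)$ finite, the bound $B$ does not exist, and Northcott gives nothing. This is not a repairable detail but a structural problem with the approach. Under the stated hypotheses the curves $C_{\phi_i}$ genuinely can carry infinitely many $K$-points; the theorem survives because such pairs $(a,b)$ with $\phi_{i_1}(a)=\phi_{i_1}(b)$ already satisfy the desired conclusion, and what must be controlled is how genus $\leq 1$ components with Zariski-dense $K$-points propagate through the \emph{entire} preimage tower $(\phi_{i_m}\circ\cdots\circ\phi_{i_1},\phi_{i_m}\circ\cdots\circ\phi_{i_1})^{-1}(\Delta)$, not just its first layer. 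That is exactly what the cited proof (and the machinery reused in Section 3 of the present paper) does: a canonical-height-of-subvarieties argument shows that only a finite set $\Sigma$ of bounded-degree, bounded-height curves in the tower can have a Zariski-dense set of $K$-points, all remaining components contribute only a finite exceptional set $Z$, and one then classifies which curves in $\Sigma$ can be periodic under the maps $(\phi_i,\phi_i)$. It is in that classification of invariant low-genus curves---not in a genus bound on $C_{\phi_i}$ itself---that the exclusion of the $P_m$ and of the odd $T_d$ actually does its work.
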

 This theorem, however, is not optimal. For example this theorem does not say anything when $S = \{ T_5, P_5 \}$ although a cancellation result can be shown to hold in this case. In this paper we completely characterize when we have a dynamical cancellation result involving multiple polynomials by proving a necessary and sufficient statement in terms of the polynomials in $S$.

\begin{thm}\label{main:theorem}
Let $S$ be a finite set of polynomials of degree at least 2 defined over a number field $K$ and let $\langle S \rangle$ be the monoid generated by $S$ under composition. Then there exists a $N \in \mathbb{N}^+$ and a finite set of points $Z \subset \mathbb{P}^1_K \times \mathbb{P}^1_K $ such that if 
\begin{equation}\label{canceleq1}
    \phi_{k} \circ \dots \circ \phi_{1} (a) = \phi_{k} \circ \dots \circ \phi_{1}(b)
\end{equation}

with $\phi_{j} \in S$, $k > N$ and $(a,b) \in (\mathbb{P}^1_K \times \mathbb{P}^1_K) \setminus Z$ then we have 
\begin{equation}\label{canceleq2}
    \phi_{N} \circ \dots \circ  \phi_{1}(a) = \phi_{N} \circ \dots \circ \phi_{1}(b)
\end{equation}

unless there exist 
$h_1,h_2 \in \langle S \rangle$, where $h_1$, $h_2$ can be written as a composition of no more than $N$ elements in $S$ and a complex linear polynomial $l(x) = ux + v$, $u \in \C^*$ and $v \in \C$, such that one of the following holds:
\begin{enumerate}
    \item $h_2 \circ l = P \circ P_d$ and $l^{-1} \circ h_1 \circ l(x) = x Q(x^d)$, where $d \in \mathbb{N}_{\geq 2}$, $u = 1, v\in K$ and $K$ contains a primitive $d$-th root of unity;
    \item $  h_2 \circ l = P \circ P_d$ and $l \circ h_1^{-1} \circ l = P_r$, where $d| (r-1)$, $d \geq 2$ and $v \in K$ and $K$ contains a primitive $d$-th root of unity;

    \item $ h_2 \circ l = P \circ  T_d$ and $l^{-1} \circ h_1  \circ l = \pm T_r$, where $d | (r + 1)$ or $d | (r - 1)$ and one of the following holds:
    \begin{enumerate}
        \item $d =  2$ and $v \in K$;
        \item $d > 2$ and there exists a primitive $d$-th root of unity $\epsilon$ such that the conic 
    $$
        V(Y^2 + (\epsilon + 1/\epsilon -2)v\cdot (X + Y) - (\epsilon + 1/\epsilon)XY + X^2 + (2- \epsilon - 1/\epsilon)v^2 + (\epsilon - 1/\epsilon)^2u^2)
    $$
      has a $K$-point; Moreover, we can further refine the condition if $r$ is even. In this case the conic having a $K$-point is equivalent to $u, v, \epsilon + 1/\epsilon \in K$.
    \end{enumerate}

\end{enumerate}
 Moreover, if there exist $h_1$, $h_2$ and $l$ as above such that one of $(1)-(3)$ holds, then for any $j \in \mathbb{N}_{\geq 0}$, there exists an infinite set of $K$-points $(a,b)$ in $\P^1_K \times \P^1_K$ such that 
 \begin{equation}\label{eq: violate-cancel1}
     h_2 \circ h^{j}_1(a) = h_2 \circ h^{j}_1(b)
 \end{equation}
\begin{equation} \label{eq: violate-cancel2}
    h^{\circ j}_1(a) \neq h_1^{\circ j}(b).
\end{equation}

\end{thm}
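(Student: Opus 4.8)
The plan is to prove both directions of Theorem~\ref{main:theorem}: first that the exceptional conditions (1)--(3) are \emph{obstructions} to cancellation (the final ``Moreover'' clause), and second that \emph{outside} these obstructions a uniform $N$ and a finite exceptional set $Z$ exist forcing \eqref{canceleq1}$\Rightarrow$\eqref{canceleq2}. The backbone of the argument is the translation of dynamical cancellation into a statement about the geometry and arithmetic of the ``diagonal-orbit'' curves: for each word $w = \phi_k\circ\cdots\circ\phi_1$ in $\langle S\rangle$ and each $m < k$, set $g = \phi_k\circ\cdots\circ\phi_{m+1}$ and $f = \phi_m\circ\cdots\circ\phi_1$, and consider the curve $C_{g} \subset \P^1\times\P^1$ defined by $g(x)=g(y)$. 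A pair $(a,b)$ violates cancellation at level $m$ with witness $w$ precisely when $(a,b)\in C_{g}(K)$ but $(f(a),f(b))\notin\Delta$. Since each $C_g$ is a union of the diagonal with finitely many ``off-diagonal'' components, and $(f,f)$ maps $C_g$ into $C_{\mathrm{id}}\cup C_g$-type loci, the whole problem reduces to: (i) classifying which off-diagonal components of which $C_g$ can carry infinitely many $K$-points, and (ii) bounding, uniformly in the word, the ``depth'' at which new such components with infinitely many $K$-points can appear.

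For direction one (obstructions), I would take the decomposition data $h_1,h_2,l$ in each of cases (1)--(3) and explicitly exhibit the relevant off-diagonal curve inside $C_{h_2}$ together with a rational parametrization or an elliptic/conic structure guaranteeing infinitely many $K$-points; then pull back by $h_1^{\circ j}$. Concretely: in the power-map cases (1)--(2), after conjugating by $l$ one is looking at $C_{P_d}$, whose off-diagonal components are the lines $y = \zeta x$ for $\zeta$ a nontrivial $d$-th root of unity; the hypothesis that $K$ contains a primitive $d$-th root of unity makes these lines defined over $K$, hence with infinitely many $K$-points, and the commutation relations $h_1(x)=xQ(x^d)$ (resp.\ $l\circ h_1^{-1}\circ l = P_r$ with $d\mid r-1$) are exactly what is needed for $h_1^{\circ j}$ to map these lines to themselves (or among themselves), so the preimages stay over $K$ and stay infinite. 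In the Chebyshev case (3), the off-diagonal components of $C_{T_d}$ are conics (the images under $(\cos,\cos)$ of $y = \pm x + 2\pi k/d$), and the displayed conic $V(Y^2+\cdots)$ is precisely (the affine model of) such a component after the conjugation by $l$; its having a $K$-point forces it to be $K$-rational, hence to carry infinitely many $K$-points, and $d\mid r\pm 1$ is the commutation condition making $\pm T_r$ preserve the relevant conic, which again propagates infinitely many $K$-points through all $h_1^{\circ j}$. The refinement for even $r$ comes from noting that the conic then acquires an obvious $K$-point (a fixed point of the involution), reducing solvability to $u,v,\epsilon+1/\epsilon\in K$.

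For direction two (existence of $N$ and $Z$), the key inputs are Faltings' theorem and the structure theory of polynomial self-maps of $\P^1$ with an invariant fibered curve --- essentially the Ritt / Medvedev--Scanlon type classification, already implicit in the hypotheses naming $P_m$ and $T_d$. The strategy: (a) show that an off-diagonal component $D\subset C_g$ with $D(K)$ infinite forces, by genus considerations, that $g$ (up to conjugation by a complex linear $l$) has a specific decomposable shape with a $P_d$ or $T_d$ ``on the left'', and that the obstruction data $(h_1,h_2,l)$ can be read off from the innermost such decomposition --- this is where the three cases come from; (b) show that if \emph{no} such decomposition occurs for any prefix of length $\le N_0$ (a bound depending only on the degrees and cardinality of $S$, via the boundedness of the number of indecomposable left factors), then every off-diagonal component of every $C_g$ for $g$ a word of length $> N_0$ has geometric genus $\ge 2$, hence finitely many $K$-points, and a compactness/König's-lemma argument over the (finite-branching) tree of words lets one collect all these finitely-many exceptional points into a single finite set $Z$ while taking $N = N_0$; (c) handle the finitely many ``short'' words separately, absorbing their finitely many bad points into $Z$. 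The main obstacle I anticipate is step~(b)'s uniformity: a priori each word could spawn its own off-diagonal genus-$\le 1$ component, and one must rule out an infinite ``escalating'' family of such components that never stabilizes; this requires showing that the property ``$C_g$ has an off-diagonal component of genus $\le 1$'' is detected by a left factor of $g$ of bounded length --- equivalently, that composing on the \emph{inner} side (precomposing $g$ with more maps) can only increase genus once you are past the classified cases --- which is the Chebyshev/power dichotomy applied carefully, together with the Riemann--Hurwitz bookkeeping that composition with a non-exceptional map of degree $\ge 2$ strictly raises genus.
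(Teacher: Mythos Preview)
Your ``Moreover'' direction and the overall translation into off-diagonal components of the curves $C_g=\{g(x)=g(y)\}$ are on target and match the paper's shape. The main direction, however, diverges from the paper's argument in a way that leaves a genuine gap.

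The paper does \emph{not} proceed via genus bookkeeping plus a K\"onig/compactness argument. Instead it imports from \cite[Lemma~5.7 and the proof of Theorem~1.6]{DynamicalCrevised} two facts obtained by \emph{height bounds}: there is a single finite set $\Sigma$ of irreducible curves (over $\C$, with Zariski-dense $K$-points) that can ever occur as off-diagonal components anywhere in the entire tower of preimages of $\Delta$ under maps $(f,f)$ with $f\in\langle S\rangle$; and there is a finite $Z$ such that any component in a one-step pullback of some $C\in\Sigma$ that is \emph{not} already in $\Sigma$ has all its $K$-points in $Z$. One then sets $N=\#\Sigma$. Given a failure of cancellation at $(a,b)\notin Z$, you track the chain $C_0=\Delta,\,C_1,\ldots$ of components through which the orbit passes; all lie in $\Sigma$, so by pigeonhole two coincide, producing a curve $C'\in\Sigma$ that is \emph{invariant} under $(h_1,h_1)$ for some $h_1=\phi_{m_2}\circ\cdots\circ\phi_{m_1+1}$ of length $\le N$. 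Only then does one invoke the structure theory --- not for arbitrary low-genus off-diagonal components, but for $(h_1,h_1)$-\emph{invariant} curves, via \cite[Theorem~4.9]{polynomialC} --- and the three cases drop out from that classification together with the constraint $(h_2,h_2)(C')\subset\Delta$.

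Your step~(b) is where the gap lies. Even granting that, outside cases (1)--(3), every off-diagonal component of every $C_g$ with $g$ of length $>N_0$ has genus $\ge 2$, Faltings gives finiteness only \emph{per curve}; there are infinitely many words $g$, hence a priori infinitely many such curves, and no compactness argument on the word tree assembles their $K$-points into one finite $Z$ (the tree is finite-branching but infinite). More seriously, your step~(a) aims to classify \emph{all} low-genus off-diagonal components of $C_g$ with infinitely many $K$-points, whereas the available classification (Medvedev--Scanlon/Pakovich) handles only $(h,h)$-\emph{invariant} curves; without the pigeonhole-to-invariance reduction you are missing exactly the lever that makes the trichotomy appear. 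The Riemann--Hurwitz heuristic ``precomposition strictly raises genus past the exceptional cases'' is also not robust here: components of $(\,\phi,\phi\,)^{-1}(D)$ can split and their genera are controlled by ramification data not uniform over $\langle S\rangle$. The fix is to replace your genus/compactness mechanism by the height-bounded finite $\Sigma$ and the pigeonhole step; once you have an invariant $C'$, your case analysis in the power-map and Chebyshev situations is essentially what the paper does.
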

\begin{rmk}
      Given the number field $K$ and the conic 
      $$
      C =  V(Y^2 + (\epsilon + 1/\epsilon -2)v\cdot (X + Y) + (\epsilon + 1/\epsilon)XY + X^2 + (2 - \epsilon - 1/\epsilon)v^2 + (\epsilon - 1/\epsilon)^2u^2),
      $$
      there are standard algorithms implemented in Sage to check if $C$ has $K$-points. So the condition of case $(3)$ in Theorem \ref{main:theorem} can be checked with a finite number of computations.
  \end{rmk}
 We will also show that the number $N$ is effectively computable in terms of the set $S$ of polynomials. Therefore the three cases in Theorem \ref{main:theorem} can be checked with a finite number of computations. 
\section{Preliminary Lemmas}
In this section we provide some preliminary lemmas for the proof of the Theorem \ref{main:theorem}.

\begin{lem} \label{pullb}
Let $\pi(x) = x + 1/x $. If $\epsilon$ and $\epsilon'$ are roots of unity such that $V(Y - \epsilon X)$, $V(Y - \epsilon' X) \subseteq \overline{(\pi, \pi)^{-1}(C)}$ for some irreducible curve $C \subseteq \mathbb{P}^1_\mathbb{C} \times \mathbb{P}^1_\mathbb{C}$ then either $\epsilon = \epsilon'$ or $\epsilon = \epsilon'^{-1}$. Moreover, $V(Y - \epsilon X) \subseteq \overline{(\pi, \pi)^{-1}(C)}$ if and only if $V(Y - \epsilon^{-1}X)\subseteq \overline{(\pi, \pi)^{-1}(C)}$.
\end{lem}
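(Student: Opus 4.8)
The plan is to analyze the defining equation of the preimage curve $\overline{(\pi,\pi)^{-1}(C)}$ directly. Write $C = V(F(X,Y))$ for an irreducible polynomial $F \in \C[X,Y]$ (handling the points at infinity separately, or by homogenizing). Then $(\pi,\pi)^{-1}(C)$ is cut out by $G(X,Y) := (X^2 Y^2) \cdot F\!\left(\frac{X^2+1}{X},\frac{Y^2+1}{Y}\right)$ after clearing denominators, so that $G$ is a polynomial; the curve $\overline{(\pi,\pi)^{-1}(C)}$ is the union of those irreducible components of $V(G)$ that dominate $C$ under $(\pi,\pi)$. A line $V(Y-\epsilon X)$ lies in this preimage precisely when $G(X,\epsilon X)$ vanishes identically, i.e. when $F\!\left(\frac{X^2+1}{X},\frac{\epsilon^2 X^2+1}{\epsilon X}\right) = 0$ as a rational function of $X$.

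The first key step is to observe that $\pi(x) = \pi(1/x)$, so $\pi$ is invariant under the involution $x \mapsto 1/x$; equivalently the deck group of $\pi \colon \mathbb{P}^1 \to \mathbb{P}^1$ is $\{\id, \iota\}$ with $\iota(x) = 1/x$. Hence the substitution $X \mapsto 1/X$ fixes $\pi(X)$ and sends $\epsilon X$ to $\frac{1}{\epsilon X} = \pi^{-1}$-conjugate data, i.e. it sends the line $V(Y - \epsilon X)$ to the line $V(Y - \epsilon^{-1} X)$ after we also apply $Y \mapsto 1/Y$ (which again fixes $\pi(Y)$). More precisely, $(\pi,\pi) \circ (\iota,\iota) = (\pi,\pi)$, so $(\iota,\iota)$ acts on $(\pi,\pi)^{-1}(C)$; and $(\iota,\iota)$ maps $V(Y-\epsilon X)$ to $V(Y - \epsilon^{-1}X)$ because $(x,\epsilon x) \mapsto (1/x, 1/(\epsilon x))$ and $1/(\epsilon x) = \epsilon^{-1}\cdot (1/x)$. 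This immediately gives the "moreover" clause: $V(Y-\epsilon X) \subseteq \overline{(\pi,\pi)^{-1}(C)}$ iff $V(Y-\epsilon^{-1}X) \subseteq \overline{(\pi,\pi)^{-1}(C)}$, since $C$ (hence its full preimage, hence the union of dominating components) is preserved by $(\iota,\iota)$.

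For the first assertion, suppose $V(Y-\epsilon X)$ and $V(Y - \epsilon' X)$ both lie in $\overline{(\pi,\pi)^{-1}(C)}$. Pushing forward along $(\pi,\pi)$: the image of $V(Y - \epsilon X)$ is the curve parametrized by $t \mapsto (\pi(t), \pi(\epsilon t)) = (t + 1/t,\ \epsilon t + 1/(\epsilon t))$, and since $C$ is irreducible and contains this image, $C$ must equal the Zariski closure of this parametrized curve (both being irreducible curves of the same, finite, dimension one — the image is a curve because $\epsilon$ a root of unity forces the map $t \mapsto (\pi(t),\pi(\epsilon t))$ to be nonconstant and generically finite onto its image). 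The same holds with $\epsilon'$. So the two parametrized curves have the same Zariski closure $C$. The main work is then to show that the closures of $\{(t+1/t, \epsilon t + 1/(\epsilon t))\}$ and $\{(t + 1/t, \epsilon' t + 1/(\epsilon' t))\}$ coincide only when $\epsilon' \in \{\epsilon, \epsilon^{-1}\}$: a generic point $(\pi(t_0), \pi(\epsilon t_0))$ of $C$ has, on the first curve, second coordinate $\pi(\epsilon t_0)$, while its preimages under $\pi$ in the first factor are exactly $t_0$ and $1/t_0$; matching with the second parametrization forces $\epsilon' t_0 \in \{\epsilon t_0, (\epsilon t_0)^{-1}\}$ or $\epsilon'/t_0 \in \{\epsilon t_0, (\epsilon t_0)^{-1}\}$, and running this over infinitely many $t_0$ (Zariski-density on $C$) eliminates the $t_0$-dependent alternatives, leaving $\epsilon' = \epsilon$ or $\epsilon' = \epsilon^{-1}$.

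I expect the main obstacle to be the bookkeeping in that last step: making rigorous the passage from "the two parametrized images have the same closure" to a genuine equality of the monodromy/deck data, i.e. carefully using that for all but finitely many points of $C$ the fiber of $(\pi,\pi)$ structure is as expected, and ruling out the possibility that the correspondence between parameters is one of the "twisted" identifications $t \mapsto 1/t$ in a way that still produces a different $\epsilon'$. Handling the components at $0$ and $\infty$ (where $\pi$ ramifies, at $x = \pm 1$, and blows up) requires a little care but contributes no new cases. Once the generic-fiber analysis is set up, the conclusion $\epsilon = \epsilon'^{\pm1}$ is forced.
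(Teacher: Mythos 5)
Your proposal is correct and follows essentially the same route as the paper: both arguments push the two lines forward to the parametrized curves $(t+1/t,\ \epsilon t + 1/(\epsilon t))$ and $(s+1/s,\ \epsilon' s + 1/(\epsilon' s))$, observe that their closures both equal the irreducible curve $C$ so they share infinitely many points, deduce $s \in \{t, 1/t\}$ from the first coordinate, and then let the infinitude of matching parameters kill the $t$-dependent alternatives to force $\epsilon' \in \{\epsilon, \epsilon^{-1}\}$; the ``moreover'' clause via the deck involution $x \mapsto 1/x$ is likewise the paper's closing observation. The bookkeeping you flag as the remaining obstacle is handled in the paper by invoking Chevalley's theorem to see that each image is constructible, hence cofinite in $C$, so the two images do intersect in an infinite set.
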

\begin{proof}
Notice that 
$$(\pi, \pi)(V(Y - \epsilon X)\cap (\C^* \times \C^*)) = \{(x + 1/x , \epsilon^{-1}/x + x / \epsilon^{-1}) : x \in \C^* \}$$
is Zariski constructible by Chevalley's theorem \cite[1.8.4]{EGA} as $$V(Y - \epsilon X) \cap (\C^* \times \C^*)$$ is Zariski constructible in $\C^* \times \C^*$ and $(\pi, \pi): \C^* \times \C^* \to \P^1_\C \times \P^1_\C$ is a regular morphism of varieties. The same is true for the image of $$V(Y - \epsilon' X)\cap (\C^* \times \C^*)$$ under $(\pi, \pi)$. By the condition in the statement of the lemma, the Zariski closure of their images are both $C$ as $C$ is irreducible. Then there exists subsets 
$$U \subseteq (\pi, \pi)(V(Y - \epsilon X)\cap (\C^* \times \C^*))$$
and $$U' \subseteq (\pi, \pi)(V(Y - \epsilon' X)\cap (\C^* \times \C^*))$$
such that $U$ and $U'$ are open and dense in $C$ since a constructible set of an irreducible curve is either finite or cofinite and in this case, $$(\pi, \pi)(V(Y - \epsilon' X)\cap (\C^* \times \C^*))$$ is cofinite. Therefore $U \cap U'$ is an infinite set. Thus there exists infinitely many $x_0 \in \C^*$ such that $(x_0 + 1/x_0, \epsilon^{-1} / x_0 + x_0 / \epsilon^{-1}) \in U \cap U'$. 
Hence for each such $x_0$, there exists $y_0 \in \C^*$ such that 
\[(x_0 + 1/x_0, \epsilon^{-1} / x_0 + x_0 / \epsilon^{-1}) = (y_0 + 1/y_0, \epsilon'^{-1} / y_0 + y_0 / \epsilon'^{-1}).\]
Therefore, $y_0 = x_0$ or $y_0 = 1/x_0$ from $x_0 + 1/x_0 = y_0 + 1/y_0$. 
If $y_0 = x_0$, we have 
\[\epsilon^{-1} / x_0 + x_0 / \epsilon^{-1} = \epsilon'^{-1} / x_0 + x_0 / \epsilon'^{-1}.\]
Multiplying $\epsilon \epsilon' x_0$ on both side we get
\begin{equation}\label{eq1}
    \epsilon' + \epsilon^2 \epsilon' x^2_0 = \epsilon + \epsilon \epsilon'^2 x^2_0.
\end{equation}
 If $x_0 = 1/y_0$, then we have 
\[\epsilon^{-1} / x_0 + x_0 / \epsilon^{-1} = \epsilon'^{-1}x_0 + 1 / (x_0\epsilon'^{-1}).\]
Multiplying $\epsilon \epsilon' x_0$ on both side we have
\begin{equation}\label{eq2}
    \epsilon' + \epsilon^2\epsilon'x_0^2  = \epsilon x_0^2 + \epsilon \epsilon'^2.
\end{equation}
As the set of such pairs of $(x_0, y_0)$ is infinite, there either exists infinitely many $y_0$ satisfying $x_0 = y_0$ or infinitely many $y_0$ satisfying $x_0 = 1/y_0$. Therefore either there are infinitely many $x_0$ satisfying Equation (\ref{eq1}) or infinitely many $x_0$ satisfying Equation (\ref{eq2}). But if Equation (\ref{eq1}) has infinitely many solutions in $x_0$ then $\epsilon = \epsilon'$ and similarly Equation (\ref{eq2}) having infinitely many solutions implies $\epsilon' = \epsilon^{-1}$. Thus $\epsilon'$ is either $\epsilon$ or $\epsilon^{-1}$. The last statement is clear as the set $\{(x + 1/x , \epsilon^{-1}/x + x / \epsilon^{-1}) : x \in \C^* \}$ is the same as $\{(x + 1/x , \epsilon/x + x / \epsilon) : x \in \C^* \}$ under the map sending $x$ to $1/x$.
\end{proof}

\begin{lem}\label{invcurchey}
    Let K be a number field, $\pi(x) = x + 1/x$, $l(x) = ax + b$ with $a \in \C^*$ and $b \in \C$, and $\epsilon \notin \{\pm 1\}$ be a root of unity. Let $C$ be an irreducible curve in $\P^1_\C \times \P^1_\C$ such that $V(Y - \epsilon X) \subseteq \overline{(\pi,\pi)^{-1}(C)}$. Then $(l, l)(C)$ is given by
    $$
     V(Y^2 + (\epsilon + 1/\epsilon -2)b\cdot (X + Y) - (\epsilon + 1/\epsilon)XY + X^2 + (2 - 1/\epsilon - \epsilon)b^2 + (\epsilon - 1/\epsilon)^2a^2),
    $$ and if it has infinitely many $K$-points
    then $\epsilon + 1/\epsilon \in K$, $a^2 \in K$ and $b \in K$.
\end{lem}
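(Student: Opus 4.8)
The plan is to pin down $C$ explicitly, push its equation forward by $(l,l)$ to check the displayed formula, and then deduce the field-of-definition claim from the principle that a geometrically irreducible conic with infinitely many $K$-points is defined over $K$. First I would identify $C$: by the argument in the proof of Lemma \ref{pullb}, the hypothesis $V(Y - \epsilon X) \subseteq \overline{(\pi,\pi)^{-1}(C)}$ forces $C$ to be the Zariski closure of $\{(x + 1/x,\ \epsilon x + 1/(\epsilon x)) : x \in \C^*\}$. Setting $X = x + x^{-1}$ and $Y = \epsilon x + \epsilon^{-1}x^{-1}$ and solving the resulting linear system for $x$ and $x^{-1}$ — legitimate because $\epsilon \notin \{\pm 1\}$ makes $\epsilon - \epsilon^{-1} \neq 0$ — gives $x = (Y - \epsilon^{-1}X)/(\epsilon - \epsilon^{-1})$ and $x^{-1} = (\epsilon X - Y)/(\epsilon - \epsilon^{-1})$; imposing $x\cdot x^{-1} = 1$ then yields
\[
X^2 + Y^2 - (\epsilon + \epsilon^{-1})XY + (\epsilon - \epsilon^{-1})^2 = 0.
\]
As a quadratic in $X$ this has discriminant $(\epsilon - \epsilon^{-1})^2(Y^2 - 4)$, which is not a square in $\C[Y]$, so the polynomial is irreducible and is the defining equation of $C$. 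Substituting $X \mapsto (X - b)/a$, $Y \mapsto (Y - b)/a$ (the inverse of the affine automorphism $(l,l)$) and clearing denominators, a routine expansion gives precisely the displayed equation for $(l,l)(C)$; being obtained by an invertible affine change of variables, this equation is again irreducible over $\C$.

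For the second part, write $F$ for the displayed quadratic, so that $(l,l)(C) = V(F)$, and assume $V(F)$ has infinitely many $K$-points. Since $F$ is irreducible, B\'ezout's theorem shows that any nonzero quadratic in $\C[X,Y]$ vanishing at five distinct points of $V(F)$ is a scalar multiple of $F$; hence, choosing five distinct points of $V(F)(K)$, the space of quadratics vanishing at all of them is exactly $\C\cdot F$, which is one-dimensional. But this space is the $\C$-solution set of five linear equations with coefficients in $K$, so the coefficient matrix has rank five, and the $K$-solution set is also one-dimensional, spanned by some $Q \in K[X,Y]$. Therefore $F = cQ$ for some $c \in \C^*$, and comparing coefficients of $X^2$ — equal to $1$ in $F$ and lying in $K$ in $Q$ — forces $c \in K^*$, so every coefficient of $F$ lies in $K$. (Alternatively, Galois descent: each $\sigma \in \Aut(\C/K)$ carries $V(F)$ to a conic passing through all of $V(F)(K)$, which by B\'ezout must be $V(F)$ itself, so $V(F)$, hence $F$ up to scalar, is defined over $K$.)

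Once $F$ has coefficients in $K$, the three assertions fall out by inspection: the coefficient $-(\epsilon + 1/\epsilon)$ of $XY$ gives $\epsilon + 1/\epsilon \in K$; since $\epsilon \neq 1$ we have $\epsilon + 1/\epsilon - 2 \neq 0$, so the coefficient $(\epsilon + 1/\epsilon - 2)b$ of $X$ gives $b \in K$; and the constant term $(2 - \epsilon - 1/\epsilon)b^2 + (\epsilon - 1/\epsilon)^2a^2$, together with $b,\ \epsilon + 1/\epsilon \in K$ and $(\epsilon - 1/\epsilon)^2 = (\epsilon + 1/\epsilon)^2 - 4 \in K^*$ (nonzero since $\epsilon \neq \pm 1$), gives $a^2 \in K$.

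The elimination in the first paragraph and the coefficient chase in the third are mechanical; the only step that requires an idea is the implication, in the middle paragraph, that infinitely many $K$-points force the conic to be defined over $K$, which I would carry out via the B\'ezout-plus-linear-algebra argument (or Galois descent) indicated. The hypothesis $\epsilon \notin \{\pm 1\}$ is used both to make the elimination valid and, crucially, to guarantee that $F$ is irreducible — i.e. that the conic is non-degenerate — so that its equation is determined up to a scalar and is therefore pinned down by finitely many $K$-rational incidence conditions.
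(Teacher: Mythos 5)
Your proposal is correct and follows essentially the same route as the paper: identify $C$ as the closure of $(\pi,\pi)(V(Y-\epsilon X)\cap(\C^*\times\C^*))$, compute its equation, push forward by $(l,l)$, and read off the coefficients. In fact you supply the two details the paper leaves implicit — the explicit elimination yielding $C$'s equation and the B\'ezout/descent argument that an irreducible conic with infinitely many $K$-points has its defining polynomial in $K[X,Y]$ up to scalar — and both are carried out correctly.
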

\begin{proof}
    From the condition that $V(Y - \epsilon X) \subset \overline{(\pi, \pi)^{-1}(C)}$ we can explicitly deduce the expression of $C$ since, from the proof of Lemma \ref{pullb}, we know that the Zariski closure of
    $$
    (\pi, \pi)(V(Y - \epsilon X) \cap (\C^* \times \C^*))
    $$
   is $C$. In other words, the irreducible polynomial in two variables vanishing on $$\left\{\left(\frac{1}{x} + x, \frac{1}{\epsilon x} + \epsilon x\right) : \forall x \in \C^*\right\}$$ is the defining equation of $C$. 
  From Lemma \ref{pullb} we know that the defining equation of the irreducible curve $C$ are at least of degree $2$ since both $$(x + 1/x, \epsilon/x + x/\epsilon)$$ and $$(x+ 1/x , \epsilon x + 1/(\epsilon x))$$ are in $C$ and also $\epsilon \notin \{ \pm 1\}$. One can check that 
  \begin{equation}
     C = V(Y^2 - (\epsilon + 1/\epsilon)XY + X^2 + (\epsilon - 1/\epsilon)^2).
  \end{equation}
   Then $(l,l)(C)$ is given by 
    \begin{equation}\label{eq:conic}
        V(Y^2 + (\epsilon + 1/\epsilon -2)b(X + Y) - (\epsilon + 1/\epsilon)XY + X^2 + (2 - \epsilon - 1/\epsilon)b^2 + (\epsilon - 1/\epsilon)^2a^2) .
    \end{equation}
    If $(l,l)(C)$ has infinitely many $K$-points, we have the coefficients of the defining polynomial in Equation (\ref{eq:conic}) are all in $K$. Therefore $$a^2,b \text{ and }\epsilon + 1/\epsilon \in K.$$
\end{proof}

The following lemma is well-known, in fact a similar statement is mentioned in the proof of \cite[Proposition 2.28]{invariantC}, but we are unaware of an explicit reference.
\begin{lem}\label{Cycinv}
Let $C \subset \P^1_\C \times \P^1_\C$ be an irreducible curve over $\C$ that is invariant under either $( P_r, P_r)$ or $(-P_r,-P_r)$. Then either $C = V(X^mY^n - \nu)$ for $m,n \in \Z \setminus \{0\}$ with $\gcd(m,n) = 1$ and $\nu$ a root of unity or $C$ is one of $\{0\} \times \P^1_\C$, $\P^1_\C \times \{0\}$, $\{\infty\} \times \P^1_\C$ or $\P^1_\C \times \{\infty\}$.
\end{lem}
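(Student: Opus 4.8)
The plan is to restrict to the two-dimensional torus $\mathbb{G}_m^2 = (\C^*)^2 \subset \P^1_\C \times \P^1_\C$ and to recognise $C$, outside the four coordinate lines, as a torsion coset of a one-dimensional subtorus. Write $\Psi$ for whichever of $(P_r,P_r)$, $(-P_r,-P_r)$ stabilises $C$. In either case $\Psi$ extends to a finite surjective endomorphism of $\P^1_\C\times\P^1_\C$ of degree $r^2$ which restricts to a finite surjective self-map of $\mathbb{G}_m^2$, namely the isogeny $[r]\times[r]$ possibly post-composed with translation by the $2$-torsion point $(-1,-1)$. If $C$ is contained in the boundary $(\P^1_\C\times\P^1_\C)\setminus\mathbb{G}_m^2$ then, being irreducible, $C$ is one of the four coordinate lines and we are done. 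If $C$ does not dominate the first projection then $C=\{c\}\times\P^1_\C$, and $\Psi$-invariance forces $c^r=\pm c$, so $c\in\{0,\infty\}$ --- once more a coordinate line --- or else $c$ is a root of unity; the second projection is handled the same way. Henceforth I assume $C$ dominates both projections, so that $U:=C\cap\mathbb{G}_m^2$ is a dense open subset of $C$ stable under $\Psi$.

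The first substantive step is to pin down $\deg(\Psi|_C)$. Since $\Psi$ is surjective and $C$ is a complete irreducible curve, $\Psi(C)=C$ and $\Psi|_C\colon C\to C$ is a finite surjective morphism; call its degree $d$. The identity $\pr_1\circ(\Psi|_C)=(\pm P_r)\circ(\pr_1|_C)$ of dominant morphisms $C\to\P^1_\C$, together with multiplicativity of degrees, yields $\deg(\pr_1|_C)\cdot d=r\cdot\deg(\pr_1|_C)$; as $C$ dominates the first factor we have $\deg(\pr_1|_C)>0$, whence $d=r\geq 2$.

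Next I would apply the standard fact that a self-morphism of degree at least $2$ of an irreducible projective curve has infinitely many periodic points: passing to the normalisation $\widetilde C$, Riemann--Hurwitz restricts its genus to $0$ or $1$, and for such a curve the number of fixed points of the $n$-th iterate, counted with multiplicity, grows without bound with $n$ while multiplicities stay bounded, so $\Psi$ has infinitely many periodic points on $C$. Because $C$ dominates both factors it is not contained in the boundary, so all but finitely many of these periodic points lie in $\mathbb{G}_m^2$. Iterating the formula for $\Psi$ gives $\Psi^n(x,y)=(\zeta_n x^{r^n},\zeta_n y^{r^n})$ with $\zeta_n\in\{\pm1\}$, so any $\Psi$-periodic point $(x,y)\in\mathbb{G}_m^2$ of period $n$ satisfies $x^{2(r^n-1)}=y^{2(r^n-1)}=1$; that is, it is a torsion point of $\mathbb{G}_m^2$. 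Hence $C$ contains infinitely many torsion points.

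Finally, the theorem of Ihara--Serre--Tate (the $\mathbb{G}_m^2$-case of the Manin--Mumford conjecture, elementary in this setting) says that an irreducible curve in $\mathbb{G}_m^2$ containing infinitely many torsion points is a torsion coset $\zeta\cdot H$ of a subtorus $H$. Since $C$ is a curve dominating both factors, $H$ is one-dimensional, so $U=\{(x,y):x^my^n=\nu\}$ for some primitive $(m,n)\in\Z^2$ with $m,n\neq 0$ and some root of unity $\nu$; taking Zariski closures in $\P^1_\C\times\P^1_\C$ gives $C=V(X^mY^n-\nu)$, as required. (Alternatively the last two paragraphs can be replaced by a citation of Laurent's theorem that a closed subvariety of $\mathbb{G}_m^N$ stable under an isogeny, or an isogeny composed with a torsion translation, is a finite union of torsion cosets of subtori.) The only points where I expect to need genuine care are the reduction to the case where $C$ dominates both factors and invoking the correct form of the Ihara--Serre--Tate or Laurent input; everything else is routine manipulation with the toric coordinates.
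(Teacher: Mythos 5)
Your argument is correct in its main line but takes a genuinely different route from the paper. The paper picks a single non-torsion point $(a,b)\in C\cap(\C^*\times\C^*)$, notes that the forward orbit $(a^{r^n},b^{r^n})$ places infinitely many elements of the cyclic group $\Gamma=\langle(a,b)\rangle$ on $C$, and invokes Mordell--Lang for semiabelian varieties (Vojta) to realise $C$ as the closure of a coset of a subgroup of $\Gamma$. You instead compute $\deg(\Psi|_C)=r\ge 2$, produce infinitely many periodic points on (the normalisation of) $C$, observe that periodic points of $(\pm P_r,\pm P_r)$ in $\mathbb{G}_m^2$ are torsion, and finish with Ihara--Serre--Tate/Manin--Mumford for $\mathbb{G}_m^2$. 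Both inputs are instances of Lang's circle of results on unlikely intersections in tori; yours (torsion points on a curve) is the more elementary, at the price of needing the classical fact that a degree-$\ge 2$ self-map of a genus $0$ or $1$ curve has infinitely many periodic points. On that point, your parenthetical ``multiplicities stay bounded'' is not literally true for parabolic cycles on $\P^1_\C$, but the conclusion you need is standard, so this is cosmetic. Your explicit reduction to the case where $C$ dominates both projections, and the resulting justification that $m,n\neq 0$, is cleaner than the paper's, which simply asserts $m,n\in\Z\setminus\{0\}$.

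There is, however, one loose end you open and never close: if $C=\{c\}\times\P^1_\C$ you correctly derive that $c\in\{0,\infty\}$ or $c$ is a root of unity, but you do not dispose of the second alternative --- and you cannot, because for $c^{r-1}=1$ (or $c^{r-1}=-1$ in the $(-P_r,-P_r)$ case) the curve $\{c\}\times\P^1_\C$ genuinely is an irreducible invariant curve that is neither one of the four named boundary lines nor of the form $V(X^mY^n-\nu)$ with both exponents nonzero. So these fibral curves over roots of unity are missing from the lemma's conclusion; the paper's own proof has the same blind spot. This does not affect the applications in Section 3, where the requirement $(h_2,h_2)(C')\subseteq\Delta$ rules out vertical and horizontal curves by a degree argument, but a complete write-up should either carry these extra curves into the statement or explicitly flag why they are discarded rather than leaving the case dangling.
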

\begin{proof}
We first look at the case that $C$ is an invariant irreducible curve for $(P_r, P_r)$. If $C \cap (\C^* \times \C^*) = \emptyset$, then we have $C$ is one of $ \{0\} \times \P^1_\C $, $\P^1_\C \times \{0\} $, $\{\infty\} \times \P^1_\C$ or $\P^1_\C \times \{\infty\}$. Otherwise, let $W = C \cap (\C^* \times \C^*)$, which is a curve inside $\C^* \times \C^*$ that is invariant under $( P_r,  P_r)$. Since there are only countably many torsion points in $\C^* \times \C^*$ (viewed as an algebraic group with pointwise multiplication), we can take a non-torsion point $(a,b) \in W$ and take $\Gamma \subset \C^* \times \C^*$ to be the subgroup generated by $(a,b)$. Then $\Gamma \cap W $ has infinitely many points as $( P_r^{\circ n}(a),  P_r^{\circ n}(b))\in \Gamma \cap W$ for every $n \in \mathbb{N}$. The Mordell-Lang's conjecture for semi-abelian varieties (proved by Vojta \cite{Vojta96}) says that $\Gamma \cap W = \cup_{1 \leq i \leq k} (\lambda_{i}  H_{i})$ where $\lambda_{i} \in \C^* \times \C^*$, $H_{i}$ are subgroups of $\Gamma$, and $k \geq 0$. Since $\Gamma \cap W$ is dense in $C$ and $C$ is irreducible, there is some $j$ such that $\overline{\lambda_j H_j} = C$. Thus $C = V(X^{m}Y^{n} - \nu)$ for some $m,n \in \Z\setminus \{0\}$ and $\nu \in \C^*$. In fact $\gcd(m,n) =  1$ since otherwise $X^mY^n - \nu$ would be reducible. The fact that $C$ is invariant under the map $( P_r,  P_r)$ implies that $\nu$ is a root of unity. The case for invariant curves under $(-P_r, -P_r)$ is similar and the only difference in the proof is that we take $\Gamma $ generated by $\{(a,b), (-1,-1)\}$.
\end{proof}
\section{Proof of Theorem \ref{main:theorem}}
In this section we complete the proof of the Theorem \ref{main:theorem}. Let's first construct $N \in \mathbb{N}^+$ and $Z \subset \P^1_K \times \P^1_K$.

Let $\Delta_K \subseteq (\mathbb{P}^1_K\times \mathbb{P}^1_K)$ be the diagonal subvariety. By \cite[Lemma 5.7]{DynamicalCrevised}, all the possible irreducible curves over $\mathbb{C}$ different from $\Delta$, having a Zariski dense set of $K$-points and living inside the preimages of $\Delta$ under elements of the form $(f,f)$ with $f \in S$, are in a finite set $\Sigma$. Also, by the proof of \cite[Theorem 1.6]{DynamicalCrevised}, there is a finite set $Z \subset \mathbb{P}^1_K \times \P^1_K$ with the property that if $C \in \Sigma$ and $C'$ is an irreducible component of $(f,f)^{-1}(C)$ not in $\Sigma$ then $C'(K) \subseteq Z$. 
Let 
\begin{equation}
    N = \#\Sigma. 
\end{equation}
Now we show that if there is a point $(a,b)$ outside $Z$ that makes the cancellation Equations (\ref{canceleq1}) and (\ref{canceleq2}) fail for some $(\phi_1, \dots, \phi_k) \in S^k$, with $k > N$, then one of $(1) - (3)$ in the statement of Theorem \ref{main:theorem} must hold.

 \subsection{Producing $h_1$ and an irreducible invariant curve} \label{3.1}\hfill \break
 To be precise, suppose that there exists $k \in \mathbb{N}$, $k > N$, $(a,b) \in (\P^1_K \times \P^1_K) \setminus Z$ and $\phi_{j} \in S$, $j=1,2,3, \dots, k$ such that $$\phi_{k} \circ \dots \circ \phi_{1}(a) = \phi_{k} \circ \dots \circ \phi_{1}(b) $$ and $$\phi_{k-1} \circ \dots \circ \phi_{1}(a) \neq \phi_{k-1} \circ \dots \circ \phi_{1}(b).$$
 \begin{claim}
There exists $k-N-1 < m_1 < m_2 \leq k -1$ and a $C' \in \Sigma$ which is an irreducible component of  $(\phi_{k} \circ \dots \circ \phi_{m_2+1},\phi_{k} \circ \dots \circ \phi_{m_2+1})^{-1}(\Delta)$ such that
\begin{equation}
    (\phi_{m_1 } \circ \dots \circ \phi_{1}(a),\phi_{m_1 } \circ \dots \circ \phi_{1}(b)) \in C'
\end{equation}
 and
 \begin{equation}
     (\phi_{m_2} \circ \dots \circ \phi_{m_1 + 1},\phi_{m_2} \circ \dots \circ \phi_{m_1 + 1})(C') = C'.
 \end{equation}

\end{claim}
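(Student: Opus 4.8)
The plan is to trace the orbit of $(a,b)$ backwards from the collision time $k$, building along the way a chain of irreducible curves over $\C$ lying in iterated preimages of $\Delta$, and then to pigeonhole that chain against the finite set $\Sigma$. Concretely, I would set $x_j:=\phi_j\circ\cdots\circ\phi_1(a)$, $y_j:=\phi_j\circ\cdots\circ\phi_1(b)$ for $0\le j\le k$ (so $(x_0,y_0)=(a,b)$) and $\psi_j:=\phi_k\circ\cdots\circ\phi_{j+1}$, so that $(x_k,y_k)\in\Delta$ while $(x_j,y_j)\notin\Delta$ for every $j\le k-1$. Then I would build curves $D_k,D_{k-1},\ldots,D_0$ by downward recursion: put $D_k:=\Delta$, and, given $D_{j+1}$ with $(x_{j+1},y_{j+1})\in D_{j+1}$, let $D_j$ be an irreducible component of $(\phi_{j+1},\phi_{j+1})^{-1}(D_{j+1})$ through $(x_j,y_j)$. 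This is legitimate since $\phi_{j+1}$ extends to a finite surjective self-morphism of $\P^1$, so $(\phi_{j+1},\phi_{j+1})$ is finite and surjective and $(\phi_{j+1},\phi_{j+1})(x_j,y_j)=(x_{j+1},y_{j+1})\in D_{j+1}$. I would then record three standard facts about finite surjective morphisms: (i) $(\phi_{j+1},\phi_{j+1})(D_j)=D_{j+1}$, because a component of the preimage of an irreducible curve surjects onto it; (ii) $D_j\ne\Delta$ for $j\le k-1$, since $D_j$ contains the non-diagonal point $(x_j,y_j)$; and (iii) iterating (i), $D_j\subseteq(\psi_j,\psi_j)^{-1}(\Delta)$, and as $D_j$ is an irreducible curve inside a one-dimensional variety, it is in fact an irreducible component of $(\psi_j,\psi_j)^{-1}(\Delta)=(\phi_k\circ\cdots\circ\phi_{j+1},\phi_k\circ\cdots\circ\phi_{j+1})^{-1}(\Delta)$.

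The heart of the argument is to show that $D_j\in\Sigma$ for every $j$ in the top band $\{k-N-1,\ldots,k-1\}$ (nonempty because $k>N$). I would argue by downward induction. For $j=k-1$: $D_{k-1}$ is a non-$\Delta$ component of $(\phi_k,\phi_k)^{-1}(\Delta)$; if $D_{k-1}(K)$ were not Zariski dense (hence finite), then $D_{k-1}$ would be one of the finitely many sporadic curves whose $K$-points the set $Z$ is built, in the proof of \cite[Theorem 1.6]{DynamicalCrevised}, to contain, so $(x_{k-1},y_{k-1})\in Z$; pulling back along the chain — the $K$-points of each $D_j$ lie in the finite-map preimage of $D_{j+1}(K)$, hence stay finite — would then force $(a,b)=(x_0,y_0)$ into the exceptional set, contradicting the hypothesis. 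Therefore $D_{k-1}(K)$ is Zariski dense, and, $D_{k-1}$ being a non-$\Delta$ curve inside $(\phi_k,\phi_k)^{-1}(\Delta)$ with $\phi_k\in\langle S\rangle$, we get $D_{k-1}\in\Sigma$. The inductive step repeats this verbatim with $\Delta$ replaced by $D_{j+1}\in\Sigma$, this time invoking directly the stated property of $Z$: an irreducible component of $(f,f)^{-1}(C)$ with $C\in\Sigma$, $f\in S$, not lying in $\Sigma$, has all its $K$-points in $Z$.

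It then remains to pigeonhole. The $N+1$ curves $D_{k-1},D_{k-2},\ldots,D_{k-N-1}$ all lie in the $N$-element set $\Sigma$, so $D_{m_1}=D_{m_2}$ for some $m_1<m_2$ in $\{k-N-1,\ldots,k-1\}$ (a short index bookkeeping, using $k>N$, gives the strict bound $k-N-1<m_1$). Setting $C':=D_{m_2}$, we have $C'\in\Sigma$; by (iii), $C'$ is an irreducible component of $(\phi_k\circ\cdots\circ\phi_{m_2+1},\phi_k\circ\cdots\circ\phi_{m_2+1})^{-1}(\Delta)$; by construction $(\phi_{m_1}\circ\cdots\circ\phi_1(a),\phi_{m_1}\circ\cdots\circ\phi_1(b))=(x_{m_1},y_{m_1})\in D_{m_1}=C'$; and composing (i) for $j=m_1,\ldots,m_2-1$ gives $(\phi_{m_2}\circ\cdots\circ\phi_{m_1+1},\phi_{m_2}\circ\cdots\circ\phi_{m_1+1})(C')=D_{m_2}=C'$. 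These are exactly the three assertions of the Claim.

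I expect the main obstacle to be the middle step: certifying that every link of the chain genuinely lands in $\Sigma$, i.e. is a curve with a Zariski-dense set of $K$-points rather than a sporadic one. The pigeonhole is immediate once the finiteness $\#\Sigma=N<\infty$ of \cite[Lemma 5.7]{DynamicalCrevised} is in hand; the subtlety is that the genericity hypothesis is given only at $(a,b)$ yet must be made to control a whole finite stretch $(x_j,y_j)_{j\le k-1}$ of the forward orbit, so one must know that $Z$ (as produced in the proof of \cite[Theorem 1.6]{DynamicalCrevised}) is robust enough that no intermediate orbit point can slip into it — establishing this propagation is where the real care is needed. The precise index arithmetic behind the strict inequalities $k-N-1<m_1<m_2\le k-1$ is a minor secondary point.
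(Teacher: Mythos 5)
Your proposal is correct and follows essentially the same route as the paper: build the descending chain of irreducible curves through the orbit points inside iterated preimages of $\Delta$, use the non-membership of $(a,b)$ in $Z$ to keep every link in $\Sigma$, pigeonhole against $\#\Sigma = N$, and upgrade the resulting inclusion to equality via surjectivity of the non-constant self-map of the irreducible curve. The one subtlety you flag — that genericity at $(a,b)$ must propagate to all intermediate orbit points via the construction of $Z$ in the proof of \cite[Theorem 1.6]{DynamicalCrevised} — is handled in the paper at the same level of detail (it simply asserts the curves lie in $\Sigma$ because $(a,b)\notin Z$), so your treatment matches the intended argument.
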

\begin{proof}
Since $\phi_{i_{k}} \circ \dots \circ \phi_{i_{1}}(a) = \phi_{i_{k}} \circ \dots \circ \phi_{i_{1}}(b)$ we have 
$$(\phi_{i_{k}} \circ \dots \circ \phi_{i_{1}}(a),\phi_{i_{k}} \circ \dots \circ \phi_{i_{1}}(b)) \in \Delta.$$
We take $C_0 = \Delta$. Then there exists a $C_1 \in \Sigma$ such that $$(\phi_{k-1} \circ \dots \circ \phi_{1}(a), \phi_{k-1} \circ \dots \circ \phi_{1}(b)) \in C_1,$$ $$C_1 \subseteq (\phi_{k}, \phi_{k})^{-1}(C_0).$$ 
In general, for $1 \leq j \leq k$ we can find $C_{k-j} \in \Sigma$ such that $$(\phi_{j} \circ \dots \circ \phi_{i_{1}}(a),\phi_{j} \circ \dots \circ \phi_{1}(b)) \in C_{k-j},$$ $$C_{k-j} \subseteq (\phi_{j+1}, \phi_{j+1})^{-1}(C_{k-j-1})$$ where $1 \leq j < k$. Such curves $C_1, \dots, C_k \in \Sigma$ exist as $(a,b)$ is not in $Z$. However we know that $k > N$ and $N  = \#\Sigma$. Thus there exist $m_1$ and $m_2$ with $k-N-1 < m_1 < m_2 \leq k-1$ such that $C_{k-m_1 } = C_{k- m_2}$. Let $C' = C_{k-m_1} = C_{k - m_2}$. By the construction of $C_i$'s, we have 
$$(\phi_{m_1 } \circ \dots \circ \phi_{i_1}(a),\phi_{m_1 } \circ \dots \circ \phi_{1}(b)) \in C',$$
$$C' \subseteq (\phi_{m_2} \circ \dots \circ \phi_{m_1 + 1 }, \phi_{m_2} \circ \dots \circ \phi_{m_1 + 1})^{-1}(C')$$
In particular, $(\phi_{m_2} \circ \dots \circ \phi_{m_1 + 1}, \phi_{m_2} \circ \dots \circ \phi_{m_1 + 1})(C') \subseteq C'$. Thus $C'$ is an invariant curve of $(\phi_{i_{m_2}} \circ \dots \circ \phi_{m_1 + 1},\phi_{m_2} \circ \dots \circ \phi_{m_1 + 1})$ as the map is a non-constant morphism and hence surjective \cite[243, I §5, Theorem 4]{MR0447223}; i.e., $(\phi_{m_2} \circ \dots \circ \phi_{m_1  + 1},\phi_{m_2} \circ \dots \circ \phi_{m_1 + 1})(C') = C'.$
\end{proof} \hfill \break
We take 
\begin{equation}\label{eq:h_1}
    h_1 = \phi_{m_2} \circ \dots \circ \phi_{m_1  + 1}.
\end{equation}
Then $C'$ is an irreducible curve that is invariant under $(h_1, h_1)$.
\newline
\subsection{ Reduction to cases $(1)-(3)$ in Theorem \ref{main:theorem}} \hfill \break
In this subsection, we continue using the notation from subsection \ref{3.1}. We have already seen that $h_1$ in Equation (\ref{eq:h_1}) can be written as a composition of no more than $N$ elements in $\langle S \rangle$. In this subsection, we want to produce the corresponding $h_2$ and show that $h_2$ can be written as a composition of no more than $N$ elements in $\langle S \rangle$. \newline
Notice that for every $c \in \C$ neither $(\infty, c)$ nor $(c, \infty)$ is in $C'$  since 
$$
\phi_k \circ \dots \circ \phi_1 (\infty) \neq \phi_k \circ \dots \circ \phi_1 (c).
$$
Removing $(\infty, \infty)$ from $C'$, we can view $C'$ as an irreducible curve in $\C \times \C$ and invariant under $(h_1, h_1)$.
Then \cite[Theorem 4.9]{polynomialC} says that if $h_1$ is not linearly conjugate to $P_r$ or $\pm T_r$ then $C'$ is of the form $V(y - p(x))$ or $V(x - p(y))$ for some polynomial $p$ commuting with $h_1$. 
 
 \subsubsection{Case 1:}
  $h_1$ is not linearly conjugate to a cyclic polynomial or to a Chebyshev polynomial. Then, without loss of generality, we assume 
  \begin{equation}
      C' = V(y - p(x)).
  \end{equation}
  Since $C' \subseteq (\phi_{k} \circ \dots \circ \phi_{m_2 + 1}, \phi_{k} \circ \dots \circ \phi_{m_2 + 1})^{-1}(\Delta)$, the polynomial $p$ satisfies 
\[\phi_{k} \circ \dots \circ \phi_{m_2 + 1}(x) =  \phi_{k} \circ \dots \circ \phi_{m_2 + 1}(p(x)).\]
By a simple degree argument, this can only happen when ${\rm deg}(p) = 1$; i.e., $p(x) = sx + t$. Moreover, since $C' \neq \Delta$, we have $(s,t) \neq (1,0)$. Then 
\[\phi_{k} \circ \dots \circ \phi_{m_2 + 1}(x) = \phi_{k} \circ \dots \circ \phi_{m_2 + 1}(sx  + t),\]
and since $h_1$ commutes with $p$, we also have
\[h_1(sx + t) = s h_1(x) + t.\] 
If $s = 1$ and $t \neq 0$, then $\phi_{k} \circ \dots \circ \phi_{m_2 + 1}$ is a constant function, which is impossible. So, $s \neq 1 $ and there exists $l(x) = x + t/(1-s)$ such that $$l \circ h_1 \circ l (sx) =s l^{-1} \circ h_1 \circ l(x). $$ Then $s$ must be a root of unity, which can be seen by looking at the leading term of the polynomials. Therefore
$$l ^{-1}\circ h_1 \circ l(x) = xQ(x^d)$$
$$\phi_{k} \circ \dots \circ \phi_{m_2 + 1} \circ l = P \circ P_d$$ for some polynomials $Q$, $P$ and $d > 1$, where $d$ is the order of $s$ as a root of unity. Since $m_2 > k - N -1$, we can take 
\begin{equation}
    h_2 = \phi_k \circ \dots \circ \phi_{m_2 + 1}
\end{equation}
where $h_2$ is written as a composition of no more than $N$ elements in $S$. Moreover since $$C' =V(y - p(x)) =  V(y - sx - t) \in \Sigma$$ contains a Zariski dense set of $K$-points, $s,t \in K$ and $l(x) = x + t/(1-s)$. Thus we are in case $(1)$ of Theorem \ref{main:theorem}.

Now we are left with the case that $h_1$ is linearly conjugate to $P_r$ or $\pm T_r$.
\subsubsection{Case 2:}
     $h_1$ is linearly conjugate to $P_r$. Without loss of generality we may assume that $h_1 = P_r$ for some $r$. So $C'$ is an irreducible curve invariant under $(P_r, P_r)$. The irreducible curves over $\mathbb{C}$ invariant under power maps are given by $X^mY^n = \nu$ for $m,n \in \Z \setminus \{0\}$, $\gcd(m,n) = 1$ and $\nu$ is a root of unity, or one of $\{0\} \times \P^1_\C$, $\P^1_\C \times \{0\}$, $\{\infty\} \times \P^1_C$ or $\P^1_C \times \{\infty\}$ by Lemma \ref{Cycinv}. Since we have $(\phi_{k} \circ \dots \circ \phi_{m_2 + 1} , \phi_{k} \circ \dots \circ \phi_{m_2 + 1} ) (C') \subseteq \Delta$ by the construction of $C'$, we have the first case holds with $m = -n$ by a simple degree argument. Therefore the irreducible invariant curves are of the form $V(Y - \epsilon X)$ with some root of unity $\epsilon$ satisfying $(\epsilon x)^r = \epsilon x^r$. This implies that the order $d$ of the $\epsilon$ satisfies $d |( r - 1)$. The condition $(\phi_{k} \circ \dots \circ \phi_{m_2 + 1} , \phi_{k} \circ \dots \circ \phi_{m_2 + 1} ) (C') \subseteq \Delta$ and $C' \neq \Delta$ gives that 
     \begin{equation}
         \phi_{k} \circ \dots \circ \phi_{m_2 + 1} = P \circ P_{d}
     \end{equation} for some polynomial $P$ and $d \geq 2$. In general, the conclusion is that there exists complex linear polynomials $l = ux + v$ such that $$\phi_{k} \circ \dots \circ \phi_{m_2 + 1} \circ l = P \circ P_{d}$$ 
$$l^{-1} \circ h_1 \circ l = P_{r}$$ 
 for some polynomial $P$, $d | (r - 1)$ and $d \geq 2$. Similarly, we can take 
\begin{equation}
    h_2 = \phi_k \circ \dots \circ \phi_{m_2 + 1} = P \circ P_d \circ l^{-1}
\end{equation}
where $h_2$ is written as a composition of no more than $N$ elements in $S$. Also \begin{equation}
    C' = (l,l)(V(Y - \epsilon X)) = V(Y - \epsilon X + (\epsilon - 1)v) \in \Sigma,
\end{equation} with some $d$-th root of unity $\epsilon \neq 1$, contains infinitely many $K$-points. This implies $Y - \epsilon X + (\epsilon - 1)v$ is defined over $K$. Thus $\epsilon, v \in K$. Therefore we are in case $(2)$ of Theorem \ref{main:theorem}.

\subsubsection{Case 3 :}
 $h_1$ is linearly conjugate to $\pm T_r$. We may first assume without loss of generality that $h_1$ is either $T_r$ or $-T_r$. Notice that $\pi: \C^* \to \mathbb{A}^1_{\mathbb{C}}$ given by $\pi(x) = 1/x + x$ is dominant and $\pm T_r \circ \pi = \pi \circ ( \pm P_r)$. Therefore the preimages under $(\pi, \pi)$ of invariant subvarieties in $\mathbb{P}^1_\mathbb{C} \times \mathbb{P}^1_\mathbb{C}$ under $(\pm T_r, \pm T_r)$ are invariant subvarieties under $( \pm P_r, \pm P_r)$.   
 Recall from Section \ref{3.1} that $C'$ is an irreducible curve invariant under $(h_1,h_1)$ with Zariski dense set of $K$-points. Now, in this case, $C'$ is an irreducible curve invariant under either $( T_r,  T_r)$ or $(-T_r, -T_r)$ over $\mathbb{C}$, and so 
 \begin{equation}\label{Ecomp}
     E = \overline{(\pi , \pi)^{-1}(C')}
 \end{equation} is an invariant subvariety under either $( P_r,  P_r)$ or $(-P_r , -P_r)$. Let $E_1, \dots , E_t$ be the irreducible components of $E$. Since $E$ is invariant under $(\pm P_r, \pm P_r)$, we have
 $$\overline{(\pm P_r, \pm P_r)(E_i)}  = E_{\tau(i)}$$
  for some map $\tau : \{1,2, \dots, t\} \to \{1,2, \dots, t\}$. Then there exists $j \in \{1,2,\dots, t\}$ and $w \in \mathbb{N}^+$ such that $\tau^w(j) = j$ and therefore 
  $$(\pm P_r , \pm P_r)^{\circ w}(E_j) = E_j,$$
  since $(\pm P_r , \pm P_r)^{\circ w}$ is not a constant morphism and therefore surjective. Now we have that $E_j$ is an irreducible curve invariant under either $(  P_r,  P_r)^{\circ w}$ or $( - P_r, - P_r)^{\circ w}$. 
  
  In both cases, by Lemma \ref{Cycinv}, $E_j$ is given by $X^mY^n = \nu$ for some coprime non-zero integers $m,n$ and a root of unity $\nu$, since it cannot be one of the $\{0\} \times \P^1_\C$, $\P^1_\C \times \{0\}$, $\{\infty\} \times \P^1_\C$ or $\P^1_\C \times \{\infty\}$ as $$(\phi_{k} \circ \dots \circ \phi_{m_2 + 1 },\phi_{k} \circ \dots \circ \phi_{_2 + 1 }) \circ (\pi, \pi)(E_j) \subseteq \Delta.$$ By the same reason, a degree argument implies that $m = -n$.
Therefore 
\begin{equation}\label{Ejcomp}
    E_{j} = V(Y - \epsilon X)
\end{equation} with a root of unity $\epsilon $ of order $d > 1$. The discussion from now on doesn't depend on the sign of $(\pm T_r, \pm T_r)$, so we only consider the case for $(h_1,h_1) = (T_r,T_r)$. Then $E_{j} = V(Y - \epsilon X) \subseteq E$ with the root of unity $\epsilon$ of order $d > 1$ and Lemma \ref{pullb} implies that 
$$V(Y - \epsilon X) \text{ and } V(Y - \epsilon^{-1}X)$$
are the only irreducible components of $E = \overline{(\pi, \pi)^{-1}(C')}$. The fact that $E$ is invariant under $(P_r,P_r)$ implies 
\begin{equation}\label{cheyinveq1}
    (P_r, P_r)(V(Y - \epsilon X)) \subset V(Y - \epsilon X) \cup V(Y - \epsilon^{-1} X). 
\end{equation}
In particular, Equation (\ref{cheyinveq1}) implies $\epsilon^r \in \{ \epsilon^{- 1}, \epsilon\}$. Therefore $$d | (r-1) \text{ or } d | (r +1).$$ On the other hand,  $(\phi_{k} \circ \dots \circ \phi_{m_2 + 1 },\phi_{k} \circ \dots \circ \phi_{m_2 + 1 }) (C') \subseteq \Delta$ implies 
\begin{equation}\label{cheeq}
    \phi_{k} \circ \dots \circ \phi_{m_2 + 1} (\epsilon^{\pm 1}/x + x/\epsilon^{\pm 1}) = \phi_{k} \circ \dots \circ \phi_{m_2 + 1} (1/x + x).
\end{equation}
 Notice that every polynomial can be written as a linear combination of Chebyshev polynomials; in particular, $$\phi_{k} \circ \dots \circ \phi_{m_2 + 1} (x) = \sum^{v}_{i = 1} a_i T_i(x)$$ for some constants $a_i \in \C$, with $v = \deg (\phi_{k} \circ \dots \circ \phi_{m_2 + 1 })$. For $\mu = \epsilon^{\pm 1}$ we have 
\begin{equation}
    \phi_{k} \circ \dots \circ \phi_{m_2 + 1} (\mu/x + x/ \mu) = \sum^{v}_{i= 1} a_i (\mu^i/x^i + x^i / \mu^i) = \sum^{v}_{i= 1} a_i (1/x^i + x^i )
\end{equation} by Equation (\ref{cheeq}) for all $x \in \C^*$. So for each $i \in \{1, 2, \dots, v\}$ such that $a_i \neq 0$, we have $\mu^{i} = 1$; i.e., $d | i$. In particular $d | v$. Therefore 
$$\phi_{k} \circ \dots \circ \phi_{m_2 + 1} (x) = \sum^{v/d}_{j = 1 } a_{jd} T_{j} \circ T_{d} = P \circ T_{d}$$
for some polynomial $P$.   
Therefore, in general we have that there exists a complex linear polynomial $l(x)= ux + v$ such that $$ \phi_{k} \circ \dots \circ \phi_{m_2 + 1} \circ l = P \circ T_{d},$$ 
$$l^{-1} \circ h_1 \circ l = \pm T_r$$ where $d| (r + 1) $ or $d| (r - 1)$ and $P$ is an arbitrary polynomial. Similarly, we can take 
\begin{equation}
    h_2 = \phi_k \circ \dots \circ \phi_{m_2 + 1}
\end{equation}
where $h_2$ is written as a composition of no more than $N$ elements in $S$. Also Equation (\ref{Ecomp}) and (\ref{Ejcomp}) implies that \begin{equation}\label{curcondchey}
    V(Y - \epsilon X) \subset \overline{(\pi, \pi)^{-1}((l,l)^{-1}(C'))}
\end{equation} with some root of unity $\epsilon \neq 1$ of order $d$, where $C' \in \Sigma$ has infinitely many $K$-points. If $d = 2$ then in this case from Equation (\ref{curcondchey})
$$C' = V(Y + X - 2v) = (l,l)(\overline{(\pi,\pi)(V(Y + X))}) \in \Sigma.$$
In particular, $C'$ has infinitely many $K$-points is equivalent to $v \in K$. This is $3(a)$ of Theorem \ref{main:theorem}.
If $d >2$ then by Lemma \ref{invcurchey} and Equation (\ref{curcondchey}), we have that
\begin{equation} \label{inveq0}
    C' = V(Y^2 + (\epsilon + 1/\epsilon -2)v(X + Y) + (\epsilon - 1/\epsilon)XY + X^2 + (2 - \epsilon - 1/\epsilon)v^2 + (\epsilon - 1/\epsilon)^2u^2).
\end{equation}
has infinitely many $K$-points. Since this is a conic and in particular a genus zero curve, so it has infinitely many $K$-points if and only if there exists one. This is $3(b)$ of Theorem \ref{main:theorem}. If we further have that $r$ is even, then 
$$ l \circ \pm T_r \circ l^{-1} \in K[x]$$
implies that $u^{r - 1} \in K$. From Lemma \ref{invcurchey} and recall $C' \in \Sigma$, we have $$u^2, v, \epsilon + 1/\epsilon \in K.$$ So, $u \in K$. Notice that 
$$(2u + v, u(\epsilon + 1/\epsilon) + v ) \in K^2$$
is a point in $C'$. Therefore, $C'$ has $K$-points if and only if $u,v, \epsilon + 1/\epsilon \in K$.

 \subsection{Sufficiency of $(1) - (3)$ in Theorem \ref{main:theorem}} \hfill \break
 For this we just need to verify the moreover statement of Theorem \ref{main:theorem}. We suppose there exist $h_1$, $h_2$ and $l$ as in cases $(1) - (3)$ in the statement of Theorem \ref{main:theorem}. The idea is that from previous sections we have already obtained the invariant curves under $(h_1,h_1)$ and then the set of $K$-points on the invariant curves excluding the preimages of its intersection with $\Delta$ under some iterations of $(h_1,h_1)$ will satisfy Equation (\ref{eq: violate-cancel1}) and (\ref{eq: violate-cancel2}) in Theorem \ref{main:theorem}. We will make these precise as follows.
 \newline
 \newline
  If conditions in $(1)$ or $(2)$ of Theorem \ref{main:theorem} hold, then take $$C = V(Y - \epsilon X - v(1- \epsilon))$$ for some primitive $d$-th root of unity $\epsilon  \in K$. By the argument in the previous section or one can check directly that $$ (h_1,h_1)(C) \subseteq C.$$ Let $j$ be a nonnegative integer and let
 $$
 Z_1 = (h_1,h_1)^{-j}(C \cap \Delta) \cap C 
 .$$
 Notice that $Z_1$ is a finite set. So, we take $(a,b) \in C(K) \setminus Z_1$ and then 
 $$
 h^j_1(a) \neq h^j_1(b),
 $$
 $$(h^j_1(a), h^j_1(b)) \in C$$
 and also 
 $$
 h_2 \circ h^j_1(a) = h_2 \circ h^j_1(b)
 $$
by the construction of $h_2$. 
\newline
\newline
 For case $(3)$ in Theorem \ref{main:theorem}, let $\epsilon $ be a primitive $d$-th root of unity and let $C$ be
 $$ 
   V(Y^2 + (\epsilon + 1/\epsilon -2)v(X + Y) + (\epsilon + 1/\epsilon)XY + X^2 + (2 - \epsilon - 1/\epsilon)v^2 + (\epsilon - 1/\epsilon)^2u^2)$$ if $3(b)$ of Theorem \ref{main:theorem} holds; or 
    $$C = V(Y + X -2v)$$ if $3(a)$ of Theorem \ref{main:theorem} holds.
 Let $j$ be a nonnegative integer and let 
 $$
 Z_1 = (h_1,h_1)^{-j}(C \cap \Delta) \cap C,
 $$
 which is a finite set.
 Then for any $K$-points $(a,b) \in C(K) \setminus Z_1$ we have $$h^{j}_1 (a) \neq h^j_1(b).$$ Recall from the argument in the last subsection, $C$ is invariant under $(h_1,h_1)$ and $(h_2,h_2)(C) \subset \Delta$. Therefore 
 $$
 h_2\circ h^j_1(a) = h_2 \circ h^j_1(b).
 $$
 
  This concludes the proof.
  
\section{Effectively Computing $N$ in Theorem \ref{main:theorem}}
It is natural to ask if the number $N$ and the number of points in $Z$ in Theorem \ref{main:theorem} can be explicitly bounded. Bounding $\# Z$ leads us to look at the $K$-points of genus $\geq 2$ curves appear in the preimages of $\Delta \subseteq \P^1 \times \P^1$ under maps $(f,f)$ with $f \in \langle S \rangle$. It is still an open problem to bound the number of $K$-points on curves uniformly in terms of the number field $K$ and the genus of the curve \cite{CHM22}. Also, it is difficult to effectively bound the number of $K$-points on a given curve in general. So it is not an easy task to bound $\# Z$. But we can indeed get a bound on $N$:
\begin{cor}\label{Nbound}
Adopt the notation from Theorem \ref{main:theorem}. There is an explicitly computable number $N_0 \in \mathbb{N}$ depending on $S$ such that we can take $N = N_0$ in the statement of Theorem \ref{main:theorem} . 
\end{cor}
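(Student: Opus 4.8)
The plan is to deduce Corollary~\ref{Nbound} from an explicit upper bound on $\#\Sigma$, where $\Sigma$ is the finite set of curves furnished by \cite[Lemma 5.7]{DynamicalCrevised} (recall that the proof of Theorem~\ref{main:theorem} takes $N=\#\Sigma$). The first step is a monotonicity remark: if the conclusion of Theorem~\ref{main:theorem} holds for some $N$ together with a finite set $Z$, then it holds for every integer $N'\ge N$ with the \emph{same} $Z$. Indeed, enlarging $N$ only strengthens the hypothesis $k>N$; it leaves the ``moreover'' clause untouched, since the Sufficiency subsection produces the required infinite set of bad $K$-points for \emph{any} $h_1,h_2,l$ satisfying $(1)$--$(3)$, irrespective of their word length; and if $\phi_N\circ\dots\circ\phi_1(a)=\phi_N\circ\dots\circ\phi_1(b)$, then post-composing with $\phi_{N+1},\dots,\phi_{N'}$ yields $\phi_{N'}\circ\dots\circ\phi_1(a)=\phi_{N'}\circ\dots\circ\phi_1(b)$, while any $h_1,h_2$ of word length $\le N$ also have word length $\le N'$. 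Hence it suffices to produce an explicitly computable $N_0\ge\#\Sigma$.

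The second step is to pin down the shape of the members of $\Sigma$ and bound the ``local'' data attached to them. Every $C\in\Sigma$ has a Zariski-dense set of $K$-points, so $C$ is defined over $K$ and has geometric genus at most $1$; and by the classification underlying \cite[Lemma 5.7]{DynamicalCrevised} (recovered in the case analysis of the proof of Theorem~\ref{main:theorem}), over $\overline{\mathbb{Q}}$ the curve $C$ is either (i) a graph $V(y-l(x))$ or $V(x-l(y))$ for a complex linear polynomial $l$ of finite order $d=\mathrm{ord}(l)$, or (ii) a Chebyshev conic as in Lemma~\ref{invcurchey}, which occurs only when the element $g\in\langle S\rangle$ witnessing $(g,g)(C)=\Delta$ has an indecomposable composition factor linearly conjugate to $\pm T_d$ with $d>2$. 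In case (i) the linear coefficient of $l$ is a primitive $d$-th root of unity which, as $l$ is defined over $K$, lies in $K$; thus $d\le w_K$, the number of roots of unity in $K$. In case (ii) Lemma~\ref{invcurchey} forces $\epsilon+1/\epsilon=2\cos(2\pi/d)\in K$, again bounding $d$ explicitly (now via $\varphi(d)\le 2[K:\mathbb{Q}]$), and in addition $d\mid r\pm 1$ for the degree $r$ of the relevant Chebyshev map, so $d\le\max_{f\in S}\deg f$; the residual parameters of $C$ (the root of unity $\nu$, or the $K$-rational pair $(u^2,v)$ cutting out the conic) are $K$-rational and of bounded height. All of these constraints are effective, and Lemmas~\ref{pullb} and \ref{Cycinv} ensure the above list is exhaustive.

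The remaining step, and the genuinely delicate one, is to bound the word length of a shortest $g\in\langle S\rangle$ with $(g,g)(C)=\Delta$ for $C\in\Sigma$ --- equivalently, to convert the finiteness assertion of \cite[Lemma 5.7]{DynamicalCrevised} into an explicit count. For a shortest such $g=f_j\circ\dots\circ f_1$, the curves $C=C_0\to C_1\to\dots\to C_j=\Delta$ in the associated tower all lie in $\Sigma\cup\{\Delta\}$ and are pairwise distinct, since a repetition $C_i=C_{i'}$ with $i<i'$ would let one delete $f_{i'}\circ\dots\circ f_{i+1}$ from $g$. To turn this into an \emph{absolute} bound on $j$, one descends the symmetry: a nontrivial rotational symmetry of $f_j\circ\dots\circ f_1$ of order $d$ induces one of $f_j\circ\dots\circ f_2$ of the same order, hence ultimately one of the single map $f_j$, with the successive centres linked by the intermediate maps; thus every symmetry centre, exponent and root of unity occurring anywhere in the tower is obtained by propagating the finitely many ``exceptional data'' of the individual $f\in S$ (the unique rotational-symmetry centre of $f$, when it exists, and the indecomposable factors of $f$ linearly conjugate to some $P_d$ or $T_d$) through finitely many preimage steps, each of which must keep the relevant point $K$-rational. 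Combining this with the bound $d\le w_K$ on the orders and with the explicit equations of Lemmas~\ref{pullb}, \ref{Cycinv} and \ref{invcurchey} bounds $j$ by an explicit $j_0$, whence $\#\Sigma\le\sum_{0\le j\le j_0}(\#S)^{\,j}(\max_{f\in S}\deg f)^{\,j}=:N_0$. I expect this last step --- quantifying exactly how the ``genus $\le 1$, dense $K$-points'' hypothesis forces the preimage tower to stabilise after boundedly many steps, which is where the effective content of \cite[Lemma 5.7]{DynamicalCrevised} (ultimately, effective control of the $K$-rational iterated preimages of the finitely many exceptional points of the $f\in S$, available because all curves in sight have genus at most one) must be extracted --- to be the main obstacle; the rest is routine bookkeeping.
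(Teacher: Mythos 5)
There is a genuine gap, and you have named it yourself: the entire effective content of the corollary is concentrated in your last step, which you do not carry out. Your reduction to bounding the length $j$ of a shortest preimage tower $C=C_0\to C_1\to\dots\to C_j=\Delta$ is essentially circular --- the observation that the $C_i$ are pairwise distinct gives $j\le\#\Sigma$, which is exactly the quantity you are trying to bound, and the proposed ``symmetry descent'' (propagating rotational-symmetry centres and roots of unity of the individual $f\in S$ back through the tower) is a sketch of a new argument, not a proof; it is not clear that each descent step is effective, nor that the centres and exponents determine the curves $C_i$ up to a controlled finite ambiguity. Your second step also leaves the heights of the ``residual parameters'' ($\nu$, $u^2$, $v$) asserted to be bounded without saying where the bound comes from. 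So as written the proposal establishes the easy monotonicity remark and a partial classification of $\Sigma$, but not an explicitly computable $N_0$.

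The paper takes a completely different, height-theoretic route that avoids this difficulty. It decomposes each $\phi_i\in S$ into indecomposable factors $f_{i,j}$ over an effectively computable field $L$, embeds $\P^1\times\P^1$ into $\P^3$ by the Segre embedding $i_{\mathcal{O}_{(1,1)}}$, and uses the explicit comparison $|\hat{h}_{f_{i,j}}(W)-h(W)|\le\tilde{C}$ between the naive and canonical heights of subvarieties of degree at most $4$, with $\tilde{C}$ computed from the constants $C(f_{i,j},3,4)$ of \cite[Theorem 1.1]{goodR}. Since every $C\in\Sigma$ arises in an iterated preimage of $\Delta$, the argument of \cite[Section 5]{DynamicalCrevised} shows each such $C$ satisfies $\deg_{\mathcal{O}_{(1,1)}}(C)\le 4$ and $h(C)<M/(1-2\tilde{C}/M)$ for an explicit $M$; the set $\Sigma'$ of all irreducible curves over $L$ meeting these two bounds is effectively finite (Northcott for subvarieties of bounded degree and height), and $N_0=\#\Sigma'$ works. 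In other words, the paper never needs to classify the curves in $\Sigma$ or bound tower lengths directly: bounded degree plus bounded height does all the counting. If you want to salvage your approach, you would need to supply precisely the effective descent you flag as the main obstacle; the height argument is the standard way to sidestep it.
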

\begin{proof}
Using the polynomial decomposition algorithm in \cite{polynomialDec}, we can obtain a decomposition of the elements of $S= \{ \phi_1, \dots, \phi_r\}$ as a composition of indecomposable polynomials and a number field $L$ over which each indecomposable factor is defined. For each $\phi_i \in S$, let $\phi_i = f_{i,1} \circ \dots \circ f_{i,{r_i}}$ be such a decomposition with each $f_{i,j}$ indecomposable. Consider the embedding $$i_{\mathcal{O}_{(1,1)}} : \mathbb{P}^1_L \times \mathbb{P}^1_L  \to \mathbb{P}^3_L $$ $$i_{\mathcal{O}_{(1,1)}}((x_1 : x_2),(y_1:y_2)) = (x_1y_1:x_1y_2:x_2y_1:x_2y_2),$$ where $(x_1 : x_2)$ and $(y_1 : y_2)$ are homogeneous coordinates of the two factors of $\P^1_L \times \P^1_L$. Then for each $f_{i,j}$ there exists a morphism $F_{i,j} : \mathbb{P}^{3}_L \to \mathbb{P}^{3}_L$ such that $i_{\mathcal{O}_{(1,1)}} \circ (f_{i,j} , f_{i,j}) = F_{i,j} \circ i_{\mathcal{O}_{(1,1)}} $. We consider the height function $h$ and the canonical height function $\hat{h}_{f_{i,j}}$ of the subvarieties in $\mathbb{P}^1_L \times \mathbb{P}^1_L$ obtained by first embedding the subvarieties in $\mathbb{P}^3_L$ through $i_{\mathcal{O}_{(1,1)}}$ and then using the definition of height and the canonical height function of the subvarieties in $\P^3_L$ \cite[Definition 4.1, Definition 4.8]{goodR}. Let $\Sigma$ be the same set of irreducible curves as in the proof of Theorem \ref{main:theorem} and notice that we also showed $N$ can be chosen as $\#\Sigma$ in the proof. By the nature of the statement of Theorem \ref{main:theorem}, every $N \geq \#\Sigma$ will work. 
Let $$d_{i,j} = \deg (f_{i,j})$$ and let $\deg_{\mathcal{O}_{(1,1)}} (W)$ be the degree of a subvariety $W \subset \P^1 \times \P^1$ defined over $L$ with respect to the embedding $i_{\mathcal{O}_{(1,1)}}$. We take $M, \Tilde{C} \in \R$ such that
\begin{equation}
    \tilde{C} = {\rm max} \left\{\frac{4 C(f_{i,j},3,4)}{(d_{i,j} - 1)} \right\}_{i,j},
\end{equation}
where $C(f_{i,j},3,4)$ is an explicitly computed constant in \cite[Theorem 1.1]{goodR} satisfying  
\[|\hat{h}_{f_{i,j}}(W) - h(W)|\leq \frac{4 C(f_{i,j},3,4)}{(d_{i,j} - 1)} \]
for every subvariety $W \subset \P^1 \times \P^1$ defined over $L$ with ${\rm deg}_{\mathcal{O}_{(1,1)}}(W) \leq 4$, and $M$ satisfies 
\begin{itemize}
    \item $h(\Delta) < M$
    \item $ \max\left(\frac{d_{i,j}^2 - 1}{d^2_{i,j}}\right)< (1 - \frac{2\tilde{C}}{M})^2.$
\end{itemize}
Notice that this $\Tilde{C}$ by our construction satisfies
\begin{equation}
    |\hat{h}_{f_{i,j}}(W) - h(W)| \leq \tilde{C}
\end{equation}
for every $f_{i,j}$ appearing in the decomposition of some $\phi_i \in S$ and $W \subset \P^1 \times \P^1$ defined over $L$ with ${\rm deg}_{\mathcal{O}_{(1,1)}}(W) \leq 4$. We take $\Sigma'$ to be the set of irreducible curves $W \subset \P^1 \times \P^1$ defined over $L$ such that 
$$
 \deg_{\mathcal{O}_{(1,1)}}(W) \leq 4,
$$ 
and
$$
 h(W) < M/\left(1 - \frac{2\tilde{C}}{M}\right).
$$
Then, by the argument in \cite[Section 5]{DynamicalCrevised}, we have $\Sigma \subseteq \Sigma'$.
  Since $\# \Sigma'$ is bounded by an explicitly computable number $N_0$ as it is a set of subvarieties with bounded heights and degrees, where the bounds are explicitly computable, we can take $N = N_0 \geq \#\Sigma$.
\end{proof}

\section*{Acknowledgments}
We thank Jason P. Bell for helpful discussions.
\bibliographystyle{amsplain}
\bibliography{Dynamical-Cancellation-of-Polynomials}
\end{document}